\newtheorem{theorem}{Theorem}[section]
\newtheorem{lemma}[theorem]{Lemma}
\newtheorem{proposition}[theorem]{Proposition}
\newtheorem{corollary}[theorem]{Corollary}
\theoremstyle{definition}
\newtheorem{example}[theorem]{Example}
\theoremstyle{remark}
\newtheorem{remark}[theorem]{Remark}
\numberwithin{equation}{section}
\newcommand{\pr}[1]{\mathfrak{#1}}
\newcommand{\prodpdots}[1]{{{\mathfrak{p}}_1}, \dots ,{{\mathfrak{p}}_{#1}}}
\newcommand{\assr}[1]{\mathrm{Ass}(#1)}
\newcommand{\qqq}[1]{{{\mathfrak{p}}_1}^{r_1} \cdots {{\mathfrak{p}}_{#1}}^{r_{#1}}}
\newcommand{\prp}[1]{{{\mathfrak{p}}_1} \cdots {{\mathfrak{p}}_{#1}}}
\begin{document}

\setcounter{page}{1}

\title[Product of prime ideals as factorization of submodules]{Product of prime ideals as factorization of submodules}

\author[Thulasi, Duraivel, and Mangayarcarassy]{K. R. Thulasi$^{*}$, T. Duraivel, and S. Mangayarcarassy}

\thanks{{The first author was supported by INSPIRE Fellowship (IF170488) of the Department of Science and Technology (DST), Government of India.\\}{\scriptsize
\hskip -0.4 true cm MSC(2010): Primary: 13A05; Secondary: 13A15, 13E05, 13E15
\newline Keywords: prime submodules, prime filtration, Noetherian ring, prime ideal factorization, regular prime extension filtration.\\
$*$Corresponding author }}
\begin{abstract}
For a proper submodule $N$ of a finitely generated module $M$ over a Noetherian ring, the product of prime ideals which occur in a regular prime extension filtration of $M$ over $N$ is defined as its generalized prime ideal factorization in $M$. In this article, we find conditions for a product of prime ideals to be the generalized prime ideal factorization of a submodule of some module. We show that a power of a prime ideal occurs in a generalized prime ideal factorization only if it is not equal to its lesser powers. Also, we show that ${{\mathfrak{p}}_1}^{r_1} \cdots {{\mathfrak{p}}_{n}}^{r_{n}}$ is a generalized prime ideal factorization if and only if for each $1 \leq i \leq n$, ${{\mathfrak{p}}_i}^{r_i}$ is the generalized prime ideal factorization of some submodule of a module.
\end{abstract}

\maketitle
\section{Introduction}

Throughout this article $R$ is a commutative Noetherian ring with identity and all $R$-modules are assumed to be finitely generated and unitary.

Let $N$ be a proper submodule of an $R$-module $M$. If $K$ is a submodule of $M$ such that $N$ is a $\pr p$-prime submodule of $K$, then we say $K$ is a $\pr p$-prime extension of $N$ in $M$ and denote it as $N \overset{\pr p} \subset K$. In this case, $\assr{K/N} = \{\pr p\}$ \cite[Theorem~1]{CPLu}. A $\pr p$-prime extension $K$ of $N$ is said to be maximal in $M$ if there is no $\pr p$-prime extension $L$ of $N$ in $M$ such that $L \supset K$. It is proved that if $N$ is a proper submodule of $M$ and $\pr p$ is a maximal element in $\assr{M/N}$, then $(N : \pr p)$ is the unique maximal $\pr p$-prime extension of $N$ in $M$ \cite[Theorem~11]{A} and it is called a regular $\pr p$-prime extension of $N$ in $M$.

A filtration of submodules $\mathcal{F} : N = M_0 \overset{{\pr p}_1}\subset M_1 \subset \cdots \overset{{\pr p}_n}\subset M_n = M$ is called a regular prime extension (RPE) filtration of $M$ over $N$ if each $M_i$ is a regular ${\pr p}_i$-prime extension of $M_{i-1}$ in $M$, $1 \leq i \leq n$. We also have that $M_i \overset{{\pr p}_{i+1}}\subset M_{i+1} \subset \cdots \overset{{\pr p}_j}\subset M_j$ is an RPE filtration of $M_j$ over $M_i$ for every $0 \leq i < j \leq n$. RPE filtrations are defined and studied in \cite{A} and it is also noted that RPE filtrations are weak prime decompositions defined by Dress in \cite{Dress}. The set of prime ideals which occur in the RPE filtration $\mathcal{F}$ is precisely $\assr{M/N}$. This is proved more generally in the following lemma.
\begin{lemma}\cite[Proposition~14]{A}\label{lemma*}
Let $ N = M_0 \overset{{\pr p}_1}\subset M_1 \subset \cdots \overset{{\pr p}_n}\subset M_n = M $ be a filtration of submodules such that each $M_{i-1} \overset{{\pr p}_i}\subset M_i$ is a maximal ${\pr p}_i$-prime extension. Then $\assr{M/M_{i-1}} = \{{\pr p}_{i},\ldots,{\pr p}_n\}$ for $1 \leq i \leq n$. In particular, $\assr{M/N} = \{{\pr p}_1,\ldots,{\pr p}_n\}$.
\end{lemma}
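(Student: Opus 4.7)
The plan is to prove the stronger statement $\assr{M/M_{i-1}} = \{\pr p_i, \ldots, \pr p_n\}$ by reverse induction on $i$, from which the ``in particular'' clause follows by taking $i = 1$. The base case $i = n$ is immediate from the hypothesis that $M_{n-1} \overset{\pr p_n}\subset M_n$ is a $\pr p_n$-prime extension, which forces $\assr{M_n/M_{n-1}} = \{\pr p_n\}$ by the Lu result quoted in the introduction.

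For the inductive step, I would split the work into an upper bound and a lower bound. The upper bound $\assr{M/M_{i-1}} \subseteq \{\pr p_i, \ldots, \pr p_n\}$ follows by applying the standard associated-prime inclusion to the short exact sequence
$$0 \longrightarrow M_i/M_{i-1} \longrightarrow M/M_{i-1} \longrightarrow M/M_i \longrightarrow 0,$$
together with $\assr{M_i/M_{i-1}} = \{\pr p_i\}$ and the inductive hypothesis $\assr{M/M_i} = \{\pr p_{i+1}, \ldots, \pr p_n\}$. The containment $\pr p_i \in \assr{M/M_{i-1}}$ is free from the injection $M_i/M_{i-1} \hookrightarrow M/M_{i-1}$.

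The main obstacle is showing that each $\pr p_j$ with $j > i$ actually appears in $\assr{M/M_{i-1}}$, because associated primes of a quotient are not automatically inherited by the larger module, so the inductive information about $\assr{M/M_i}$ does not transfer for free. This is precisely where the maximality hypothesis enters. Assuming $\pr p_j \neq \pr p_i$ (otherwise the claim is already contained in the previous paragraph), the inductive hypothesis gives a submodule $L$ of $M$ with $M_i \subsetneq L$ and $L/M_i \cong R/\pr p_j$, obtained by lifting a copy of $R/\pr p_j$ inside $M/M_i$. Applying the associated-prime inclusion to
$$0 \longrightarrow M_i/M_{i-1} \longrightarrow L/M_{i-1} \longrightarrow R/\pr p_j \longrightarrow 0$$
yields $\assr{L/M_{i-1}} \subseteq \{\pr p_i, \pr p_j\}$. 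If $\pr p_j$ were absent from this set, then $\assr{L/M_{i-1})} = \{\pr p_i\}$, so $L$ would be a $\pr p_i$-prime extension of $M_{i-1}$ strictly containing $M_i$ inside $M$, contradicting the maximality of $M_i$ as a $\pr p_i$-prime extension of $M_{i-1}$ in $M$. Hence $\pr p_j \in \assr{L/M_{i-1}} \subseteq \assr{M/M_{i-1}}$, which completes the induction.
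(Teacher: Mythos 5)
The paper itself offers no proof of this lemma --- it is quoted verbatim from \cite[Proposition~14]{A} --- so there is nothing internal to compare against; judged on its own terms, your argument is sound except for one step, and that step is a genuine gap. The base case, the upper bound $\assr{M/M_{i-1}} \subseteq \{\pr p_i\} \cup \assr{M/M_i}$ from the short exact sequence, the membership $\pr p_i \in \assr{M/M_{i-1}}$, and the reduction of the lower bound to showing $\pr p_j \in \assr{L/M_{i-1}}$ for a lift $L = M_i + Rx$ of a copy of $R/\pr p_j$ are all fine.

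The gap is the inference from $\assr{L/M_{i-1}} = \{\pr p_i\}$ to ``$L$ is a $\pr p_i$-prime extension of $M_{i-1}$.'' That is the converse of the Lu result quoted in the introduction, and it is false in general: a module can have a single associated prime without being a prime module (e.g.\ over $R = k[x]$ one has $\assr{R/(x^2)} = \{(x)\}$, yet $(x^2)$ is not an $(x)$-prime submodule of $R$). To contradict the maximality of $M_i$ you must produce an honest $\pr p_i$-prime extension of $M_{i-1}$ properly containing $M_i$; in particular you would need $(M_{i-1} : L) = \pr p_i$, i.e.\ $\pr p_i x \subseteq M_{i-1}$, whereas $\assr{L/M_{i-1}} = \{\pr p_i\}$ only gives $\sqrt{(M_{i-1} : x)} = \pr p_i$, hence ${\pr p_i}^k x \subseteq M_{i-1}$ for some $k$. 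Note also that $\assr{L/M_{i-1}} = \{\pr p_i\}$ together with $(M_{i-1}:x) \subseteq (M_i : x) = \pr p_j$ forces $\pr p_i \subseteq \pr p_j$; so when $\pr p_i \not\subseteq \pr p_j$ your contradiction can be salvaged directly from this containment, but in the remaining case $\pr p_i \subsetneq \pr p_j$ the configuration ``$\assr{L/M_{i-1}} = \{\pr p_i\}$ but $L/M_{i-1}$ not $\pr p_i$-prime'' is exactly what must be excluded, and your argument supplies no tool for doing so. Some additional input about maximal prime extensions (of the kind developed in \cite{A}) is needed to close this case.
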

The submodules occurring in an RPE filtration are characterized as below.
\begin{lemma}\cite[Lemma~3.1]{B}
Let $N$ be a proper submodule of an $R$-module $M$. If $N = M_0 \overset{{\pr p}_1}\subset M_1 \subset \cdots \overset{{\pr p}_n}\subset M_n = M$ is an RPE filtration of $M$ over $N$, then $M_i = \{ x \in M \mid \prp i x \subseteq N \}= (N : \prp i)$ for $ 1 \leq i \leq n$. \label{lemma1}
\end{lemma}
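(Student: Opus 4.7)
The plan is to prove both equalities by induction on $i$, leaning on the fact (from \cite[Theorem~11]{A}, quoted in the introduction) that a regular $\mathfrak{p}$-prime extension of a submodule $L$ in $M$ is precisely the colon submodule $(L:\mathfrak{p}) = \{x\in M \mid \mathfrak{p} x \subseteq L\}$. The second equality in the statement, $\{x\in M \mid \mathfrak{p}_1\cdots\mathfrak{p}_i x \subseteq N\} = (N:\mathfrak{p}_1\cdots\mathfrak{p}_i)$, is just the definition of the colon, so the real content is identifying $M_i$ with this common set.

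For the base case $i=1$, the hypothesis says that $M_1$ is the regular $\mathfrak{p}_1$-prime extension of $M_0 = N$ in $M$, so by the cited theorem $M_1 = (N:\mathfrak{p}_1)$, which is exactly $\{x\in M \mid \mathfrak{p}_1 x \subseteq N\}$. For the inductive step, assume $M_{i-1} = (N:\mathfrak{p}_1\cdots\mathfrak{p}_{i-1})$. Since $M_{i-1} \overset{\mathfrak{p}_i}\subset M_i$ is a regular $\mathfrak{p}_i$-prime extension in $M$ (this is part of the definition of an RPE filtration), the cited theorem again gives $M_i = (M_{i-1}:\mathfrak{p}_i)$.

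It then remains to check the set-theoretic identity $(M_{i-1}:\mathfrak{p}_i) = (N:\mathfrak{p}_1\cdots\mathfrak{p}_i)$. This is a short direct argument: $x \in (M_{i-1}:\mathfrak{p}_i)$ iff $\mathfrak{p}_i x \subseteq M_{i-1} = (N:\mathfrak{p}_1\cdots\mathfrak{p}_{i-1})$, which unfolds to $\mathfrak{p}_1\cdots\mathfrak{p}_{i-1}(\mathfrak{p}_i x) \subseteq N$, i.e., $\mathfrak{p}_1\cdots\mathfrak{p}_i x \subseteq N$, and both directions of this equivalence are clear from the definition of ideal product.

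I do not expect a serious obstacle here: the whole argument is essentially bookkeeping once one has \cite[Theorem~11]{A} in hand. The only subtle point worth flagging is that the hypothesis of the inductive step requires $M_{i-1} \overset{\mathfrak{p}_i}\subset M_i$ to be a \emph{maximal} (in fact regular) $\mathfrak{p}_i$-prime extension \emph{in $M$}, not merely in some intermediate module; this is exactly what the RPE filtration definition grants, and without it one could not apply the colon-formula at each stage. Everything else is purely formal.
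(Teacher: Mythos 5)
Your proof is correct. The paper itself cites this lemma from elsewhere without reproducing a proof, and your argument — induction on $i$, using that a regular $\mathfrak{p}_i$-prime extension is the colon submodule $(M_{i-1}:\mathfrak{p}_i)$ in $M$, followed by the elementary identity $((N:\mathfrak{p}_1\cdots\mathfrak{p}_{i-1}):\mathfrak{p}_i)=(N:\mathfrak{p}_1\cdots\mathfrak{p}_i)$ — is the natural and surely the intended route, with the one genuinely necessary caveat (that the extensions are regular \emph{in $M$}) correctly identified.
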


It is proved that the number of times a prime ideal occurs in any RPE filtration of $M$ over $N$ is unique \cite[Theorem~22]{A}. So if $N = M_0 \overset{{\pr p}_1}\subset M_1 \subset \cdots \overset{{\pr p}_n}\subset M_n = M $ is an RPE filtration, then the product $\prp n$ is uniquely defined for $N$ in $M$ and it is called the generalized prime ideal factorization of $N$ in $M$, written as ${\mathcal{P}}_M(N)= \prp n$. Generalized prime ideal factorization of submodules is defined and its various properties are studied in \cite{E}.

In general, a product of prime ideals need not be a generalized prime ideal factorization of some submodule of a module.
\begin{example}\label{example4.1}
Let $R = \frac{k[x,y,z]}{(xy - z^2 , x^2 - yz)}$ and $\pr p$ be the prime ideal $(\overline{x}, \overline{z})$. Suppose there exists an $R$-module $M$ and a submodule $N$ with ${\mathcal{P}}_M(N) = {\pr p}^2$. Then there is an RPE filtration $N \overset{\pr p} \subset N_1 \overset{\pr p} \subset M$. By Lemma \ref{lemma*}, $\assr{M/N_1} = \{ \pr p\}$. So $\pr p = (N_1 : m)$ for some $m \in M$. We have ${\pr p}(\overline{x}, \overline{y}, \overline{z}) = {\pr p}^2$, which implies ${\pr p}(\overline{x}, \overline{y}, \overline{z})M = {\pr p}^2M \subseteq N$. Therefore, $(\overline{x}, \overline{y}, \overline{z})m \subseteq (N : \pr p) = N_1$ [Lemma \ref{lemma1}] which implies $(\overline{x}, \overline{y}, \overline{z}) \subseteq (N_1 : m) = \pr p$, a contradiction. Therefore, we cannot have an $R$-module $M$ and a submodule $N$ with ${\mathcal{P}}_M(N) = {\pr p}^2$.
\end{example}

In this article, for a product of prime ideals ${\pr p}_1, \dots, {\pr p}_n$, we find conditions for the existence of an $R$-module $M$ and a submodule $N$ of $M$ with ${\mathcal{P}}_M(N) = \prp n$. We show that a power of a prime ideal occurs in a generalized prime ideal factorization only if it is not equal to its lesser powers. We show that ${{\pr p}_i}^{r_i} \neq {{\pr p}_i}^{r_i - 1}$ and $\assr{R/{{\pr p}_i}^{r_i}} = \{{{\pr p}_i}\}$ for every $1 \leq i \leq n$ is a sufficient condition for the existence of an $R$-module $M$ and a submodule $N$ of $M$ with ${\mathcal{P}}_{M}(N)=\qqq n$. Also, we show that $\qqq n$ is a generalized prime ideal factorization if and only if for each $1 \leq i \leq n$, ${{\pr p}_i}^{r_i}$ is the generalized prime ideal factorization of some submodule of a module.

\section{Conditions for ideals to be generalized prime ideal factorization of submodules}
\label{Sec:2}
We need the following lemmas.

\begin{lemma}\label{reg_div_rpe}
Let $N$ be a proper submodule of an $R$-module $M$. If a submodule $K$ of $M$ lies in an RPE filtration of $M$ over $N$, then ${\mathcal{P}}_M(N) = {\mathcal{P}}_M(K) {\mathcal{P}}_K(N)$.
\end{lemma}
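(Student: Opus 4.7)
The plan is to read everything off the given RPE filtration directly, invoking two facts already recorded in the introduction: first, any contiguous piece of an RPE filtration is itself an RPE filtration (of its top over its bottom); second, by [A, Theorem~22], the generalized prime ideal factorization depends only on $N$ and $M$, not on the particular RPE filtration used to compute it.

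First I would fix an RPE filtration of $M$ over $N$ that passes through $K$, say
\[
N = M_0 \overset{\mathfrak{p}_1}\subset M_1 \subset \cdots \overset{\mathfrak{p}_i}\subset M_i = K \overset{\mathfrak{p}_{i+1}}\subset M_{i+1} \subset \cdots \overset{\mathfrak{p}_n}\subset M_n = M,
\]
for some $0 \leq i \leq n$. From the original filtration itself we have $\mathcal{P}_M(N) = \mathfrak{p}_1 \cdots \mathfrak{p}_n$.

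Next I would apply the remark (recalled just before Lemma~\ref{lemma*}) that $M_0 \overset{\mathfrak{p}_1}\subset \cdots \overset{\mathfrak{p}_i}\subset M_i$ is an RPE filtration of $K$ over $N$ and $M_i \overset{\mathfrak{p}_{i+1}}\subset \cdots \overset{\mathfrak{p}_n}\subset M_n$ is an RPE filtration of $M$ over $K$. Reading the generalized prime ideal factorization off each of these truncated filtrations yields
\[
\mathcal{P}_K(N) = \mathfrak{p}_1 \cdots \mathfrak{p}_i, \qquad \mathcal{P}_M(K) = \mathfrak{p}_{i+1} \cdots \mathfrak{p}_n.
\]
Here I would note explicitly that these products are well-defined as $\mathcal{P}_K(N)$ and $\mathcal{P}_M(K)$ precisely because of the uniqueness cited from [A, Theorem~22]; otherwise one might worry that a different RPE filtration of $K$ over $N$ could yield a different product. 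Multiplying gives $\mathcal{P}_M(K)\,\mathcal{P}_K(N) = \mathfrak{p}_1 \cdots \mathfrak{p}_n = \mathcal{P}_M(N)$, which is the claim.

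There is really no hard step here: the lemma is, in essence, the statement that concatenation of RPE filtrations corresponds to multiplication of their associated prime-product factorizations, and the only thing to check is that truncating an RPE filtration preserves the RPE property, which the introduction already records. The conventions for $i = 0$ and $i = n$ (empty products equal to $R$) are the only minor bookkeeping point, and they are handled by the same formula.
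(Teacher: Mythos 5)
Your proposal is correct and follows essentially the same route as the paper's own proof: fix the filtration through $K$, truncate it on either side of $K$ using the fact (recorded in the introduction) that any contiguous piece of an RPE filtration is again an RPE filtration, and read off the three factorizations. The only difference is that you make the appeal to the uniqueness result of [A, Theorem~22] explicit, which the paper leaves implicit; this is a reasonable addition but not a change of method.
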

\begin{proof}
If a submodule $K$ of $M$ lies in an RPE filtration of $M$ over $N$, then we have an RPE filtration
$$N = N_0 \overset{{\pr p}_1}\subset N_1 \subset \cdots  \overset{{\pr p}_r}\subset N_r \overset{{\pr p}_{r+1}} \subset \cdots \overset{{\pr p}_n} \subset N_n = M,$$ 
where $N_r = K$ for some $r$. So ${\mathcal{P}}_M(N) = \prp n$. Then $$N = N_0 \overset{{\pr p}_1}\subset N_1 \subset \cdots  \overset{{\pr p}_r}\subset N_r=K \text{ and } K=N_r \overset{{\pr p}_{r+1}} \subset N_{r+1} \subset \cdots \overset{{\pr p}_n} \subset N_n = M$$ are RPE filtrations, and therefore, ${\mathcal{P}}_K(N) = \prp r$ and ${\mathcal{P}}_M(K)={\pr p}_{r+1} \cdots {\pr p}_n $. This proves the lemma.
\end{proof}

\begin{lemma}\cite[Lemma~20]{A}\label{interchange}
Let $N$ be a proper submodule of $M$ and $N = M_0 \subset \cdots \subset M_{i-1}\overset{{\pr p}_i}\subset M_i \overset{{\pr p}_{i+1}}\subset M_{i+1}\subset \cdots \subset M_n = M $ be an RPE filtration of $M$ over $N$. If ${\pr p}_{i+1} \not\subseteq {\pr p}_{i}$ for some $i$, then there exists a submodule $K_i$ of $M$ such that $N = M_0 \subset \cdots \subset M_{i-1}\overset{{\pr p}_{i+1}}\subset K_i \overset{{\pr p}_{i}}\subset M_{i+1}\subset \cdots \subset M_n$ $=$ $M$ is an RPE filtration of $M$ over $N$.
\end{lemma}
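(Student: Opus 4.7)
The natural candidate is $K_i := (M_{i-1} : {\pr p}_{i+1})$, and the plan is to verify that replacing $M_i$ by $K_i$ yields an RPE filtration. Since the segments before $M_{i-1}$ and after $M_{i+1}$ are unchanged from the original, only two things need to be checked: (a) $K_i \subseteq M_{i+1}$ is the regular ${\pr p}_{i+1}$-prime extension of $M_{i-1}$ in $M$; (b) $M_{i+1}$ is the regular ${\pr p}_i$-prime extension of $K_i$ in $M$.

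For (a), I first apply Lemma~\ref{lemma*} to obtain $\assr{M/M_{i-1}} = \{{\pr p}_i,\ldots,{\pr p}_n\}$ and $\assr{M/M_i} = \{{\pr p}_{i+1},\ldots,{\pr p}_n\}$; maximality of ${\pr p}_{i+1}$ in the latter forces ${\pr p}_{i+1} \not\subseteq {\pr p}_j$ for $j > i+1$, and combining this with the hypothesis ${\pr p}_{i+1} \not\subseteq {\pr p}_i$ makes ${\pr p}_{i+1}$ maximal in $\assr{M/M_{i-1}}$. The characterization of regular prime extensions recalled before Lemma~\ref{lemma*} then identifies $K_i = (M_{i-1}:{\pr p}_{i+1})$ as the unique regular ${\pr p}_{i+1}$-prime extension of $M_{i-1}$ in $M$, and the inclusion $K_i \subseteq (M_i : {\pr p}_{i+1}) = M_{i+1}$ is immediate since $M_{i-1} \subseteq M_i$.

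For (b), the colon identity $(K_i : {\pr p}_i) = (M_{i-1} : {\pr p}_i {\pr p}_{i+1}) = (M_i:{\pr p}_{i+1}) = M_{i+1}$ is routine, so the real work is to show ${\pr p}_i$ is maximal in $\assr{M/K_i}$. Set $X := M_{i+1}/M_{i-1}$; Lemma~\ref{lemma*} gives $\assr{X} = \{{\pr p}_i, {\pr p}_{i+1}\}$, and the hypothesis together with maximality of ${\pr p}_i$ in $\assr{M/M_{i-1}}$ (which rules out ${\pr p}_i \subsetneq {\pr p}_{i+1}$) show the two primes are incomparable, hence both isolated in $\assr{X}$ with unique primary components. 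The crux is to identify $K_i/M_{i-1}$ with the ${\pr p}_i$-primary component $Q := \ker(X \to X_{{\pr p}_i})$: any submodule of $X$ whose associated primes lie in $\{{\pr p}_{i+1}\}$ vanishes at ${\pr p}_i$ and hence lies in $Q$; conversely the associated primes of $Q$ itself are contained in $\{{\pr p}_{i+1}\}$, so the preimage of $Q$ in $M_{i+1}$ is a ${\pr p}_{i+1}$-prime extension of $M_{i-1}$ and therefore contained in the maximal such $K_i$. This gives $\assr{M_{i+1}/K_i} = \assr{X/Q} = \{{\pr p}_i\}$; combining with $\assr{M/M_{i+1}} = \{{\pr p}_{i+2},\ldots,{\pr p}_n\}$ via the short exact sequence $0 \to M_{i+1}/K_i \to M/K_i \to M/M_{i+1} \to 0$ then yields $\assr{M/K_i} \subseteq \{{\pr p}_i, {\pr p}_{i+2},\ldots,{\pr p}_n\}$, in which ${\pr p}_i$ remains maximal since it was maximal in $\assr{M/M_{i-1}}$.

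The main obstacle is the identification of $K_i/M_{i-1}$ with the isolated ${\pr p}_i$-primary component of $0$ in $X$. A naive attempt to deduce $\assr{M_{i+1}/K_i} = \{{\pr p}_i\}$ from an equality $M_{i+1} = M_i + K_i$ fails in general, as the example $R = k[x,y]$, $M = R/(xy)$, ${\pr p}_1 = (x)$, ${\pr p}_2 = (y)$ shows (there $M_1 + K_1 = (x,y)/(xy) \subsetneq M_2 = M$), so uniqueness of the isolated primary components of $0$ in $X$, rather than any direct splitting of short exact sequences, is the essential ingredient.
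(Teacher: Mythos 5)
The paper does not actually prove this lemma --- it is imported verbatim from [Duraivel--Mangayarcarassy--Premkumar, Lemma 20] --- so your argument can only be judged on its own terms. Your overall strategy is the natural one and most of it is sound: $K_i = (M_{i-1}:{\pr p}_{i+1})$ is the right candidate, part (a) (maximality of ${\pr p}_{i+1}$ in $\assr{M/M_{i-1}}$ and $K_i \subseteq M_{i+1}$) is correct, the colon identity $(K_i:{\pr p}_i) = M_{i+1}$ is correct, and reducing everything to the maximality of ${\pr p}_i$ in $\assr{M/K_i}$ is the right reduction.

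The gap is in the second half of your identification $K_i/M_{i-1} = Q$. You argue that since $\assr{Q} \subseteq \{{\pr p}_{i+1}\}$, the preimage $\widetilde{Q}$ of $Q$ in $M_{i+1}$ is a ${\pr p}_{i+1}$-prime extension of $M_{i-1}$ and is therefore contained in the maximal such extension $K_i$. But ``$\assr{L/N} = \{{\pr p}\}$'' does not imply ``$N$ is a ${\pr p}$-prime submodule of $L$'': the introduction records only the forward implication from Lu's theorem, and the converse already fails for $N = 0 \subset L = R/(x^2)$ over $R = k[x]$ with ${\pr p} = (x)$. What you actually need for $\widetilde{Q} \subseteq K_i = (M_{i-1}:{\pr p}_{i+1})$ is precisely ${\pr p}_{i+1}Q = 0$, whereas $\assr{Q} = \{{\pr p}_{i+1}\}$ only gives ${\pr p}_{i+1}^{\,k}Q = 0$ for some $k$; and deriving ${\pr p}_{i+1}Q = 0$ from the prime-extension property is circular, since that property presupposes $(M_{i-1}:\widetilde{Q}) = {\pr p}_{i+1}$. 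The step can be repaired directly: $Q = \ker(X \to X_{{\pr p}_i})$ is finitely generated, so $sQ = 0$ for some $s \notin {\pr p}_i$; moreover ${\pr p}_{i+1}Q \subseteq {\pr p}_{i+1}X \subseteq M_i/M_{i-1}$ because $M_{i+1} = (M_i:{\pr p}_{i+1})$, and $M_i/M_{i-1}$ is a torsion-free $R/{\pr p}_i$-module because $M_{i-1} \overset{{\pr p}_i}\subset M_i$ is a prime extension; since $s \notin {\pr p}_i$ kills ${\pr p}_{i+1}Q$, torsion-freeness forces ${\pr p}_{i+1}Q = 0$, i.e.\ $Q \subseteq (M_{i-1}:{\pr p}_{i+1})/M_{i-1} = K_i/M_{i-1}$. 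With this patch the remainder of your argument (the exact sequence giving $\assr{M/K_i} \subseteq \{{\pr p}_i,{\pr p}_{i+2},\dots,{\pr p}_n\}$ and the unchanged outer links) goes through.
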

\begin{remark}\label{reordering}
Suppose a product of prime ideals $\prp r$ is ${\mathcal{P}}_M(N)$ for some module $M$ and a submodule $N$ of $M$. For every reordering of $\prodpdots r$ with ${\pr p}_i \not \subset {\pr p}_j$ for $i<j$, by applying Lemma \ref{interchange} for a sufficient number of times, we can get an RPE filtration $N \overset{{\pr p}_1}\subset N_1 \overset{{\pr p}_2}\subset N_2 \subset \cdots \subset N_{r-1} \overset{{\pr p}_r}\subset N_r = M$.
\end{remark}

\begin{proposition}\label{idealthm}
For a prime ideal $\,\pr p$ and a positive integer $r$, if $\,{\pr p}^r$ is the generalized prime ideal factorization of a submodule of an $R$-module, then ${\pr p}^r \neq {\pr p}^{r - 1}$.
\end{proposition}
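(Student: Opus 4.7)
The plan is to argue by contradiction using the explicit description of the submodules in an RPE filtration given in Lemma \ref{lemma1}. Suppose $\mathfrak{p}^r = \mathcal{P}_M(N)$ for some module $M$ and proper submodule $N$. Then by definition there is an RPE filtration
$$N = M_0 \overset{\mathfrak{p}}\subset M_1 \overset{\mathfrak{p}}\subset \cdots \overset{\mathfrak{p}}\subset M_r = M,$$
where each step is a regular $\mathfrak{p}$-prime extension. In particular each inclusion is strict, so $M_{r-1} \subsetneq M_r$.

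Next I would invoke Lemma \ref{lemma1} with all the primes equal to $\mathfrak{p}$. The lemma identifies $M_i = (N :_M \mathfrak{p}^i)$ for each $i$, so in particular
$$M_{r-1} = (N :_M \mathfrak{p}^{r-1}) \quad \text{and} \quad M_r = (N :_M \mathfrak{p}^{r}).$$
Now suppose, toward a contradiction, that $\mathfrak{p}^r = \mathfrak{p}^{r-1}$. The two colon submodules then coincide, yielding $M_{r-1} = M_r$, which contradicts the strict inclusion $M_{r-1} \subsetneq M_r$ noted above. Therefore $\mathfrak{p}^r \neq \mathfrak{p}^{r-1}$.

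The case $r=1$ fits the same framework: then $\mathfrak{p}^{r-1} = R$, and $\mathfrak{p} \neq R$ because $\mathfrak{p}$ is a prime ideal, which is proper by convention; alternatively, $(N :_M R) = N \subsetneq M_1$ gives the same contradiction. There is no serious obstacle here; the only thing to be careful about is invoking Lemma \ref{lemma1} correctly with the repeated prime $\mathfrak{p}$, so that the product $\mathfrak{p}_1 \cdots \mathfrak{p}_i$ collapses to $\mathfrak{p}^i$ and the description $M_i=(N:_M \mathfrak{p}^i)$ is available.
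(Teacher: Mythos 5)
Your argument is correct and is essentially identical to the paper's own proof: both take the RPE filtration $N = M_0 \overset{\mathfrak{p}}\subset \cdots \overset{\mathfrak{p}}\subset M_r = M$, apply Lemma \ref{lemma1} to identify $M_i = (N :_M \mathfrak{p}^i)$, and derive $M_{r-1} = M_r$ from $\mathfrak{p}^{r-1} = \mathfrak{p}^r$, contradicting the strictness of the last extension. Your extra remark on the $r=1$ case is a harmless addition not present in the paper.
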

\begin{proof}
Suppose $M$ is an $R$-module and $N$ is a submodule of $M$ with ${\mathcal{P}}_M(N) = {\pr p}^r$. Then we have an RPE filtration
$$N \overset{\pr p}\subset N_1 \overset{\pr p}\subset N_2 \subset \cdots \subset N_{r-1} \overset{\pr p}\subset N_r = M.$$
If ${\pr p}^r = {\pr p}^{r - 1}$, then by Lemma \ref{lemma1}, $M= N_r = \{x \in M \mid {\pr p}^r x \subseteq N \} = \{x \in M \mid {\pr p}^{r - 1} x \subseteq N \} = N_{r-1}$, a contradiction.
\end{proof}
Example \ref{example4.1} shows that the converse of the above result is not true.

\begin{corollary}\label{cor_power_iff}
Let $\pr p$ be a prime ideal in $R$ and $r$ be a positive integer. Then ${\mathcal{P}}_R({{\pr p}}^{r}) = {{\pr p}}^{r}$ if and only if ${\pr p}^r \neq {\pr p}^{r - 1}$ and $\assr{R/{{\pr p}^r}} = \{ \pr p \}$.
\end{corollary}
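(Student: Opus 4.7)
The plan is to prove the two implications separately, leaning on Lemma \ref{lemma*} and Lemma \ref{lemma1} as the principal tools.

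For the forward implication, I would assume ${\mathcal{P}}_R({\pr p}^r) = {\pr p}^r$. Proposition \ref{idealthm} then gives ${\pr p}^r \neq {\pr p}^{r-1}$ immediately. Moreover, any RPE filtration realizing the factorization has exactly $r$ steps, each a $\pr p$-prime extension, so Lemma \ref{lemma*} delivers $\assr{R/{\pr p}^r} = \{\pr p\}$.

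For the converse, suppose ${\pr p}^r \neq {\pr p}^{r-1}$ and $\assr{R/{\pr p}^r} = \{\pr p\}$. Nonemptiness of the associated primes guarantees ${\pr p}^r \subsetneq R$, so there exists an RPE filtration ${\pr p}^r = N_0 \overset{{\pr q}_1}\subset \cdots \overset{{\pr q}_n}\subset N_n = R$. Lemma \ref{lemma*} forces every ${\pr q}_i = \pr p$, so the factorization is ${\pr p}^n$ and it remains to show $n = r$. Here Lemma \ref{lemma1} identifies $N_i = ({\pr p}^r : {\pr p}^i)$, whence the condition $N_n = R$ translates to ${\pr p}^n \subseteq {\pr p}^r$, while the strictness $N_{n-1} \subsetneq N_n$ translates to ${\pr p}^{n-1} \not\subseteq {\pr p}^r$. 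The latter rules out $n \geq r+1$ at once (else ${\pr p}^{n-1} \subseteq {\pr p}^r$ trivially), giving $n \leq r$. Conversely, if $n \leq r-1$, the containments ${\pr p}^{r-1} \subseteq {\pr p}^n \subseteq {\pr p}^r \subseteq {\pr p}^{r-1}$ collapse the $r$-th and $(r-1)$-th powers, contradicting the first hypothesis. Hence $n = r$.

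The only step requiring genuine effort is the length count in the converse, and this is exactly where both hypotheses earn their keep: the nested containment of powers of $\pr p$ rules out a filtration longer than $r$, while ${\pr p}^r \neq {\pr p}^{r-1}$ rules out one that terminates sooner.
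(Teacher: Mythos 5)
Your proof is correct, and the forward implication is exactly the paper's argument (Proposition \ref{idealthm} plus Lemma \ref{lemma*} applied to an RPE filtration realizing the factorization). Where you diverge is the converse: the paper disposes of it in one line by citing Corollary~2.16 of the reference \cite{E}, whereas you reprove that fact from scratch. Your route is a nice self-contained alternative: nonemptiness of $\assr{R/{\pr p}^r}$ gives properness of ${\pr p}^r$, hence an RPE filtration of $R$ over ${\pr p}^r$ exists; Lemma \ref{lemma*} forces all its prime labels to be $\pr p$; and Lemma \ref{lemma1} identifies $N_i = ({\pr p}^r : {\pr p}^i)$, so that $N_n = R$ reads ${\pr p}^n \subseteq {\pr p}^r$ and $N_{n-1} \neq R$ reads ${\pr p}^{n-1} \not\subseteq {\pr p}^r$. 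Your two-sided squeeze is sound, including the slightly delicate lower bound, where $n \leq r-1$ would give ${\pr p}^{r-1} \subseteq {\pr p}^n \subseteq {\pr p}^r \subseteq {\pr p}^{r-1}$ and hence ${\pr p}^r = {\pr p}^{r-1}$, contradicting the hypothesis (the edge case $n-1=0$ also works, since $N_0 = {\pr p}^r \neq R$ corresponds to ${\pr p}^0 = R \not\subseteq {\pr p}^r$). What the paper's citation buys is brevity; what your argument buys is independence from the external result and a transparent explanation of where each of the two hypotheses is actually used, which the bare citation hides. The only point worth making explicit is the existence of an RPE filtration for a proper ideal of $R$, which is standard from \cite{A} and is implicitly assumed throughout the paper.
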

\begin{proof}
If ${\pr p}^r \neq {\pr p}^{r - 1}$ and $\assr{R/{{\pr p}^r}} = \{ \pr p \}$, then by \cite[Corollary~2.16]{E}, ${\mathcal{P}}_R({{\pr p}}^{r}) = {{\pr p}}^{r}$. Conversely if ${\mathcal{P}}_R({{\pr p}}^{r}) = {{\pr p}}^{r}$, then we have an RPE filtration ${{\pr p}}^{r} \overset{\pr p}\subset {\pr a}_1 \overset{\pr p}\subset {\pr a}_2 \subset \cdots \subset {\pr a}_{r-1} \overset{\pr p}\subset {\pr a}_r = R$. By Lemma \ref{lemma*}, $\assr{R/{{\pr p}^r}} = \{ \pr p \}$ and by Proposition \ref{idealthm}, ${\pr p}^r \neq {\pr p}^{r - 1}$.
\end{proof}

\begin{corollary}\label{corprimepowers}
Let ${\pr p}_1, \dots, {\pr p}_n$ be distinct prime ideals in $R$ and $r_1, \dots , r_n$ be positive integers. If there exists an $R$-module $M$ and a submodule $N$ of $M$ with ${\mathcal{P}}_M(N) = \qqq n$, then ${{\pr p}_i}^{r_i} \neq {{\pr p}_i}^{r_i - 1}$ for every $1 \leq i \leq n$. 
\end{corollary}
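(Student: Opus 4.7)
The plan is to induct on the number $n$ of distinct primes, with Proposition \ref{idealthm} furnishing the base case $n=1$. The inductive step will reduce from $n$ to $n-1$ by peeling off all $r_k$ occurrences of a suitably chosen prime ${\pr p}_k$ from the end of an RPE filtration, splitting the filtration via Lemma \ref{reg_div_rpe}, and then applying Proposition \ref{idealthm} to the last segment while invoking the inductive hypothesis on the first.

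The key observation for the inductive step is that if ${\pr p}_k$ is chosen to be minimal (with respect to inclusion) among the distinct primes $\{{\pr p}_1,\dots,{\pr p}_n\}$, then for every $j \neq k$ we have ${\pr p}_j \not\subseteq {\pr p}_k$: minimality rules out the strict containment ${\pr p}_j \subsetneq {\pr p}_k$, and distinctness rules out equality. This is precisely the hypothesis of Lemma \ref{interchange} needed to swap an occurrence of ${\pr p}_k$ with a ${\pr p}_j$ immediately to its right. Thus, starting from any RPE filtration witnessing ${\mathcal{P}}_M(N) = \qqq n$, Remark \ref{reordering} lets us produce an RPE filtration
$$N = N_0 \subset N_1 \subset \cdots \subset N_{m-r_k} = L \overset{{\pr p}_k}\subset N_{m-r_k+1} \overset{{\pr p}_k}\subset \cdots \overset{{\pr p}_k}\subset N_m = M,$$
in which the final $r_k$ inclusions are all ${\pr p}_k$-prime extensions and the first $m-r_k$ inclusions involve only the primes $\{{\pr p}_j : j \neq k\}$.

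Now Lemma \ref{reg_div_rpe} applied at $L$ gives ${\mathcal{P}}_M(L) = {\pr p}_k^{r_k}$, so Proposition \ref{idealthm} yields ${\pr p}_k^{r_k} \neq {\pr p}_k^{r_k-1}$. The same lemma gives ${\mathcal{P}}_L(N) = \prod_{j \neq k}{\pr p}_j^{r_j}$, a generalized prime ideal factorization involving the $n-1$ distinct primes $\{{\pr p}_j : j \neq k\}$; the inductive hypothesis then forces ${\pr p}_j^{r_j} \neq {\pr p}_j^{r_j-1}$ for every $j \neq k$. Together these cover every $1 \leq i \leq n$. The only real subtlety is justifying the reordering, and this is exactly where distinctness of the primes enters: without it, minimality alone would not guarantee the non-containment ${\pr p}_j \not\subseteq {\pr p}_k$ required by Lemma \ref{interchange}.
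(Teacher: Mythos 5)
Your proposal is correct and is essentially the paper's argument: both rely on Lemma \ref{interchange}/Remark \ref{reordering} to rearrange an RPE filtration so that all occurrences of a given prime become contiguous, observe that the resulting segment realizes ${\pr p}_i^{r_i}$ as a generalized prime ideal factorization, and then invoke Proposition \ref{idealthm}. The only difference is packaging: the paper sorts all the primes into blocks at once and applies Proposition \ref{idealthm} to each block directly, whereas you peel off one minimal prime at a time and induct on $n$ --- a cosmetic rather than substantive distinction.
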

\begin{proof}
Reorder ${\pr p}_1, \dots, {\pr p}_n$ so that ${\pr p}_i \not \subset {\pr p}_j$ for $i<j$. By Remark \ref{reordering}, we have an RPE filtration
\begin{multline*}
N= L^{(0)} \overset{{\pr p}_{1}}\subset L^{(1)}_1 \overset{{\pr p}_{1}}\subset L^{(1)}_2 \overset{{\pr p}_{1}}\subset \cdots \overset{{\pr p}_{1}}\subset L^{(1)}_{r_1} \overset{{\pr p}_{2}}\subset L^{(2)}_1 \subset \cdots \\ \overset{{\pr p}_{n-1}}\subset L^{(n-1)}_{r_{n-1}} \overset{{\pr p}_{n}}\subset L^{(n)}_1 \overset{{\pr p}_{n}}\subset L^{(n)}_2 \subset \cdots \overset{{\pr p}_{n}}\subset L^{(n)}_{r_{n}} = M
\end{multline*}
of $M$ over $N$. Then for every $1 \leq i \leq n$, $$L^{(i-1)}_{r_{i-1}} \overset{{\pr p}_{i}}\subset L^{(i)}_1 \overset{{\pr p}_{i}}\subset L^{(i)}_2 \subset \cdots \overset{{\pr p}_{i}}\subset L^{(i)}_{r_{i}-1} \overset{{\pr p}_{i}}\subset L^{(i)}_{r_{i}}$$ is an RPE filtration of $L^{(i)}_{r_{i}}$ over $L^{(i-1)}_{r_{i-1}}$, and therefore, ${{\pr p}_i}^{r_i}$ is the generalized prime ideal factorization of $L^{(i-1)}_{r_{i-1}}$ in $L^{(i)}_{r_{i}}$. By Proposition \ref{idealthm}, ${{\pr p}_i}^{r_i} \neq {{\pr p}_i}^{r_i - 1}$.
\end{proof}

\begin{theorem}\label{idealthm1}
Let ${\pr p}_1, \dots, {\pr p}_n$ be distinct prime ideals in $R$ and $r_1, \dots , r_n$ be positive integers. If there exists an $R$-module $M$ and a submodule $N$ of $M$ with ${\mathcal{P}}_M(N) = \qqq n$, then ${{\pr p}_1}^{r_1} \cdots {{\pr p}_i}^{r_i - 1} \cdots {{\pr p}_n}^{r_n} \neq \qqq n$ whenever ${\pr p}_i$ is minimal among $\{{\pr p}_1, \dots, {\pr p}_n\}$.
\end{theorem}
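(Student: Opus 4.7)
The plan is to exploit the reordering machinery from Remark \ref{reordering} together with the explicit description of filtration terms provided by Lemma \ref{lemma1}. The idea is to push all $r_i$ copies of $\pr p_i$ to the tail of an RPE filtration of $M$ over $N$, then read off the penultimate submodule as a colon in $M$ and compare with $M$ itself.

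Concretely, here is the argument I would carry out. Since $\pr p_i$ is minimal among $\pr p_1,\ldots,\pr p_n$, every other prime $\pr p_j$ satisfies $\pr p_j \not\subset \pr p_i$, which is precisely the hypothesis needed to invoke Remark \ref{reordering} and place all copies of $\pr p_i$ at the end of an RPE filtration
\[
N = M_0 \subset M_1 \subset \cdots \subset M_{m-1} \overset{\pr p_i}{\subset} M_m = M,
\]
where $m = r_1 + \cdots + r_n$ and the product of the primes appearing in the first $m-1$ steps is $\pr p_1^{r_1}\cdots\pr p_i^{r_i-1}\cdots\pr p_n^{r_n}$. Applying Lemma \ref{lemma1} to both $M_{m-1}$ and $M_m$ gives
\[
M = \bigl(N : \qqq n\bigr), \qquad M_{m-1} = \bigl(N : \pr p_1^{r_1}\cdots\pr p_i^{r_i-1}\cdots\pr p_n^{r_n}\bigr).
\]
Equality of the two ideal products would then force $M_{m-1} = M$, contradicting the fact that $M_{m-1}\overset{\pr p_i}{\subset}M_m$ is a proper prime extension; hence the two ideals must differ.

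The one step that genuinely needs care is the admissibility of the reordering, that is, checking that placing the $\pr p_i$'s at the end is compatible with the condition $\pr p_a \not\subset \pr p_b$ for $a<b$ demanded by Remark \ref{reordering}. For a pair in which the later term is $\pr p_i$, the earlier term is some $\pr p_j$ with $j\neq i$, and the condition $\pr p_j \not\subset \pr p_i$ is exactly the minimality of $\pr p_i$; for pairs among the remaining primes one can use any linear extension of the reverse-inclusion partial order on the finite set $\{\pr p_j : j\neq i\}$; pairs of equal primes are automatic. Once this bookkeeping is in place, the contradiction extracted from Lemma \ref{lemma1} is immediate, and no further ingredients (localization, Nakayama, etc.) are needed.
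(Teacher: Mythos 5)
Your proposal is correct and follows essentially the same route as the paper: reorder via Remark \ref{reordering} so that all copies of the minimal prime $\pr p_i$ sit at the end of the filtration, identify the last two terms as colon submodules via Lemma \ref{lemma1}, and derive a contradiction from the properness of the final prime extension. Your extra care about why the reordering satisfies the hypothesis of Remark \ref{reordering} is a point the paper passes over quickly, but the substance of the argument is identical.
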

\begin{proof}
Let ${\pr p}_i$ be minimal among $\{{\pr p}_1, \dots, {\pr p}_n\}$. We can reorder ${\pr p}_1, \dots, {\pr p}_n$ such that ${\pr p}_n = {\pr p}_i$ and ${\pr p}_j \not \subset {\pr p}_k$ for $j<k$. So without loss of generality, we assume that $i=n$. Then by Remark \ref{reordering}, we have the RPE filtration
\begin{multline*}
N= L^{(0)} \overset{{\pr p}_{1}}\subset L^{(1)}_1 \overset{{\pr p}_{1}}\subset L^{(1)}_2 \overset{{\pr p}_{1}}\subset \cdots \overset{{\pr p}_{1}}\subset L^{(1)}_{r_1} \overset{{\pr p}_{2}}\subset L^{(2)}_1 \subset \cdots \\ \overset{{\pr p}_{n-1}}\subset L^{(n-1)}_{r_{n-1}} \overset{{\pr p}_{n}}\subset L^{(n)}_1 \overset{{\pr p}_{n}}\subset L^{(n)}_2 \subset \cdots \overset{{\pr p}_{n}}\subset L^{(n)}_{r_{n}-1} \overset{{\pr p}_{n}}\subset L^{(n)}_{r_{n}} = M
\end{multline*}
of $M$ over $N$. Then for every $x \in M$, $\qqq n x \subseteq N$ [Lemma \ref{lemma1}]. Suppose ${{\pr p}_1}^{r_1} \cdots {{\pr p}_{n-1}}^{r_{n-1}} {{\pr p}_n}^{r_n - 1} = \qqq n$. Then ${{\pr p}_1}^{r_1} \cdots {{\pr p}_{n-1}}^{r_{n-1}}$ ${{\pr p}_n}^{r_n - 1} x  \subseteq N$. By Lemma \ref{lemma1}, $x \in L^{(n)}_{r_{n}-1}$. This implies $L^{(n)}_{r_{n}-1} = M$, which is not true. Therefore, ${{\pr p}_1}^{r_1} \cdots {{\pr p}_i}^{r_i - 1} \cdots {{\pr p}_n}^{r_n} \neq \qqq n$.
\end{proof}

\begin{remark}
The above result is not true if ${\pr p}_i$ is not minimal among $\{{\pr p}_1, \dots, {\pr p}_n\}$. For example, in the ring $R = \frac{k[x,y,z]}{(xy-z, yz-x)}$, let $\pr p$ and $\pr q$ be the prime ideals $(\overline{x}, \overline{y}, \overline{z})$ and $(\overline{x}, \overline{z})$ respectively. In this case, $\pr p \pr q = \pr q$. For the $R$-module $M = \frac{R}{\pr q} \oplus \frac{R}{\pr q}$ and its submodule $N = \frac{\pr p}{\pr q} \oplus 0$, we have the RPE filtration $$N = \dfrac{\pr p}{\pr q} \oplus 0 \quad \overset{\pr p} \subset \quad \dfrac{R}{\pr q} \oplus 0 \quad \overset{\pr q}\subset \quad \dfrac{R}{\pr q} \oplus \dfrac{R}{\pr q} = M$$ of $M$ over $N$. So we have ${\mathcal{P}}_M(N) = \pr p \pr q$ and $\pr p \pr q = \pr q$.
\end{remark}

The next lemma is about the regular prime extensions on direct sums.
\begin{lemma}\label{directsumlemma}
Let $N$, $N'$ be submodules of modules $M$, $M'$ respectively and $N \overset{\pr p} \subset K$ be a regular ${\pr p}$-prime extension in $M$.
\begin{enumerate}[(i)]
\item If $N' \overset{\pr p} \subset K'$ is a regular ${\pr p}$-prime extension in $M'$, then $N \oplus N' \overset{\pr p} \subset K \oplus K'$ is a regular ${\pr p}$-prime extension in $M \oplus M'$.
\item If ${\pr p} \notin \assr{M'/N'}$ and ${\pr p}$ is a maximal element in $\assr{M/N} \cup \assr{M'/N'}$, then $N \oplus N' \overset{\pr p} \subset K \oplus N'$ is a regular ${\pr p}$-prime extension in $M \oplus M'$.
\end{enumerate}
\end{lemma}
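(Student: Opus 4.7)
The plan is to reduce both parts to the characterization stated in the introduction: whenever $\pr p$ is a maximal element of $\assr{M/N}$, the regular $\pr p$-prime extension of $N$ in $M$ is the unique submodule $(N : \pr p)$. Two elementary identities will do almost all of the work. First, the colon behaves well with direct sums: $((N \oplus N') : \pr p) = (N : \pr p) \oplus (N' : \pr p)$, since $\pr p (x, x') \subseteq N \oplus N'$ if and only if $\pr p x \subseteq N$ and $\pr p x' \subseteq N'$. Second, since $(M \oplus M')/(N \oplus N') \cong M/N \oplus M'/N'$, we have $\assr{(M \oplus M')/(N \oplus N')} = \assr{M/N} \cup \assr{M'/N'}$.

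For part (i), the hypotheses give $K = (N : \pr p)$ with $\pr p$ maximal in $\assr{M/N}$ and $K' = (N' : \pr p)$ with $\pr p$ maximal in $\assr{M'/N'}$. Therefore $\pr p$ is maximal in $\assr{M/N} \cup \assr{M'/N'} = \assr{(M \oplus M')/(N \oplus N')}$, and by the colon identity $((N \oplus N') : \pr p) = K \oplus K'$. So $K \oplus K'$ is exactly the regular $\pr p$-prime extension of $N \oplus N'$ in $M \oplus M'$.

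For part (ii), the only nontrivial step is to verify that $(N' : \pr p) = N'$ in $M'$, because once this is known the colon identity gives $((N \oplus N') : \pr p) = K \oplus N'$, and the maximality hypothesis on $\pr p$ in $\assr{M/N} \cup \assr{M'/N'} = \assr{(M \oplus M')/(N \oplus N')}$ (combined with $\pr p \in \assr{M/N}$) makes this the regular $\pr p$-prime extension. To show $(N' : \pr p) = N'$, suppose there is some $x' \in M' \setminus N'$ with $\pr p x' \subseteq N'$, and let $\bar{x'}$ be its nonzero image in $M'/N'$. Then $\pr p \subseteq \mathrm{ann}(\bar{x'})$, so any $\pr q \in \assr{R\bar{x'}} \subseteq \assr{M'/N'}$ satisfies $\pr q \supseteq \mathrm{ann}(\bar{x'}) \supseteq \pr p$; the maximality of $\pr p$ in $\assr{M/N} \cup \assr{M'/N'}$ forces $\pr q = \pr p$, contradicting $\pr p \notin \assr{M'/N'}$.

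The argument in part (ii) — extracting an associated prime above $\pr p$ inside $\assr{M'/N'}$ to produce the contradiction — is the only step requiring care; everything else is the two identities above followed by a direct appeal to the uniqueness characterization of regular prime extensions.
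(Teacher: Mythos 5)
Your proof is correct, but it takes a genuinely different route from the paper's. You reduce everything to the characterization from \cite[Theorem~11]{A}: the regular $\pr p$-prime extension is the colon $(N:\pr p)$ whenever $\pr p$ is maximal in $\assr{M/N}$. With the two identities $((N\oplus N'):\pr p)=(N:\pr p)\oplus(N':\pr p)$ and $\assr{(M\oplus M')/(N\oplus N')}=\assr{M/N}\cup\assr{M'/N'}$, both parts collapse to checking that $\pr p$ stays maximal in the union and, for (ii), that $(N':\pr p)=N'$; your argument for the latter (any associated prime of $R\bar{x'}$ would contain $\pr p$, hence equal $\pr p$ by maximality, forcing $\pr p\in\assr{M'/N'}$) is sound. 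The paper instead verifies the definition by hand: it shows $(N\oplus N':K\oplus K')=\pr p$, checks the prime-extension condition elementwise, and proves maximality by contradiction against a hypothetical larger extension, only invoking maximality of $\pr p$ in the union at the very end. Your approach is shorter and buys a genuine advantage: the paper's maximality step assumes the competing larger $\pr p$-prime extension has the product form $L\oplus L'$, which is not automatic for a submodule of $M\oplus M'$, whereas your appeal to the uniqueness of $(N\oplus N'):\pr p$ as \emph{the} maximal $\pr p$-prime extension sidesteps any structural assumption on competitors. What the paper's hands-on computation buys in exchange is independence from the colon characterization (it reproves rather than cites the relevant facts), which your version delegates entirely to the cited theorem.
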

\begin{proof}
\emph{(i)} We have ${\pr p}K \subseteq N$ and ${\pr p}K' \subseteq N'$. Therefore, ${\pr p}\subseteq (N \oplus N' : K \oplus K')$. Let $a \in (N \oplus N' : K \oplus K')$ and $x \in K \setminus N$. Then $(x,0) \in K \oplus K' \setminus N \oplus N'$ and $a(x,0) \in N \oplus N'$. This implies $ax \in N$. Since $x \notin N$ and $N \overset{\pr p} \subset K$ is a ${\pr p}$-prime extension in $M$, we get $a \in \pr p$. Hence, $(N \oplus N' : K \oplus K') = \pr p$.

Let $a \in R$, $(x,y) \in K \oplus K'$ such that $a(x,y) \in N \oplus N'$. Suppose $(x,y) \notin N \oplus N'$. Then either $x \notin N$ or $y \notin N'$. If $x \notin N$, since $ax \in N$ and $N \overset{\pr p} \subset K$ is a ${\pr p}$-prime extension, we get $a \in \pr p$. If $x \in N$, then $y \notin N'$ and since $N' \overset{\pr p} \subset K'$ is a ${\pr p}$-prime extension in $M'$, $ay \in N'$ implies $a \in \pr p$. Hence, $N \oplus N' \overset{\pr p} \subset K \oplus K'$ is a ${\pr p}$-prime extension in $M \oplus M'$.

Suppose it is not maximal. Let $L \oplus L'$ be a ${\pr p}$-prime extension of $N \oplus N'$ in $M \oplus M'$ with $K \oplus K' \subsetneq L \oplus L'$. Then ${\pr p}(L \oplus L') \subseteq N \oplus N'$. This implies ${\pr p}L \subseteq N$. Since $N \subsetneq K \subset L$, we have $N \subsetneq L$. Let $x \in L \setminus N$. Then for every $a \in (N:L)$, $ax \in N$, and therefore, $a(x,0) \in N \oplus N'$. Since $(x,0) \notin N \oplus N'$, we get $a \in \pr p$. Therefore, $L$ is a $\pr p$-prime extension of $N$ in $M$ and $K \subset L$. Since $N \overset{\pr p} \subset K$ is a maximal ${\pr p}$-prime extension in $M$, we get $K=L$. Similarly for $N' \subsetneq K' \subset L'$, we get $K' = L'$. This implies $K \oplus K' = L \oplus L'$, a contradiction. Hence, $N \oplus N' \overset{\pr p} \subset K \oplus K'$ is a maximal ${\pr p}$-prime extension in $M \oplus M'$.

Since $\pr p$ is a maximal element in both $\assr{M/N}$ and $\assr{M'/N'}$, $\pr p$ is maximal in $\assr{M/N} \cup \assr{M'/N'}$. Therefore, $\pr p$ is maximal in $\assr{\frac{M \oplus M'}{N \oplus N'}}$. Hence, $N \oplus N' \overset{\pr p} \subset K \oplus K'$ is a regular ${\pr p}$-prime extension in $M \oplus M'$.

\emph{(ii)} Taking $K' = N'$ in the proof of $(i)$, we get $N \oplus N' \overset{\pr p} \subset K \oplus N'$ is a ${\pr p}$-prime extension in $M \oplus M'$. Suppose this ${\pr p}$-prime extension is not maximal. Let $L \oplus L'$ be a ${\pr p}$-prime extension of $N \oplus N'$ in $M \oplus M'$ with $K \oplus N' \subsetneq L \oplus L'$. Then ${\pr p}(L \oplus L') \subseteq N \oplus N'$. This implies ${\pr p}L \subseteq N$. Since $N \subsetneq K \subset L$, we have $N \subsetneq L$. As in the proof of $(i)$, we get $K=L$. We have $N' \subseteq L'$. If $N' \subsetneq L'$, then using the same argument we get $N' \overset{\pr p}\subset L'$ is a ${\pr p}$-prime extension in $M'$ and therefore, ${\pr p} \in \assr{M'/N'}$ [Lemma \ref{lemma*}], which is not the case. Therefore, $N' = L'$. Then $K \oplus N' = L \oplus L'$, which is a contradiction. Hence, $N \oplus N' \overset{\pr p} \subset K \oplus N'$ is a maximal ${\pr p}$-prime extension in $M \oplus M'$.

Since $\pr p$ is a maximal element in $\assr{M/N} \cup \assr{M'/N'}$, $\pr p$ is maximal in $\assr{\frac{M \oplus M'}{N \oplus N'}}$. Hence, $N \oplus N' \overset{\pr p} \subset K \oplus N'$ is a regular ${\pr p}$-prime extension in $M \oplus M'$.
\end{proof}

\begin{lemma}\label{directsum_corollary}
Let $N_1, \dots , N_n$ be submodules of $R$-modules $M_1, \dots , M_n$ respectively and let ${\pr p}$ be a maximal element in $\cup_{i=1}^n\assr{M_i/N_i}$. For $1 \leq i \leq n$, let $K_i$ be the regular ${\pr p}$-prime extension of $N_i$ in $M_i$ if $\pr p \in \assr{M_i/N_i}$ and $K_i = N_i$ otherwise. Then ${\oplus_{i=1}^n{N}_i} \overset{\pr p} \subset {\oplus_{i=1}^n{K}_i}$ is a regular ${\pr p}$-prime extension in ${\oplus_{i=1}^n{M}_i}$.
\end{lemma}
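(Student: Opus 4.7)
The plan is to proceed by induction on $n$, reducing every step to the two-summand case already handled in Lemma \ref{directsumlemma}. The base case $n=1$ is just the definition of $K_1$. For the inductive step I would write
$$\bigoplus_{i=1}^n M_i = \left(\bigoplus_{i=1}^{n-1} M_i\right) \oplus M_n,$$
and similarly split the $N_i$'s and $K_i$'s. Using the standard identity
$$\assr{\textstyle\bigoplus_{i=1}^{n-1} M_i \,/\, \bigoplus_{i=1}^{n-1} N_i} \;=\; \bigcup_{i=1}^{n-1} \assr{M_i/N_i},$$
any sub-union of $\bigcup_{i=1}^{n} \assr{M_i/N_i}$ inherits the maximality of $\pr p$, so the maximality hypotheses demanded by the two parts of Lemma \ref{directsumlemma} will come for free whenever we invoke it.

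After this set-up I would split into three cases, according to whether $\pr p$ lies in $\assr{M_n/N_n}$, in $\bigcup_{i<n} \assr{M_i/N_i}$, or in both. If $\pr p$ lies in both, the inductive hypothesis gives a regular $\pr p$-prime extension $\bigoplus_{i<n} N_i \overset{\pr p}\subset \bigoplus_{i<n} K_i$ in $\bigoplus_{i<n} M_i$, and Lemma \ref{directsumlemma}\emph{(i)} applied to this together with $N_n \overset{\pr p}\subset K_n$ yields the conclusion. If $\pr p \in \assr{M_n/N_n}$ but $\pr p \notin \bigcup_{i<n} \assr{M_i/N_i}$, then $K_i = N_i$ for every $i < n$, and I would apply Lemma \ref{directsumlemma}\emph{(ii)} with the extension side being $M_n, N_n, K_n$ and the unchanged side being $\bigoplus_{i<n} M_i, \bigoplus_{i<n} N_i$ (reordering summands if necessary to match the statement of (ii)). If $\pr p \notin \assr{M_n/N_n}$, then $K_n = N_n$ and $\pr p$ must still lie in $\bigcup_{i<n} \assr{M_i/N_i}$; I would then combine the inductive hypothesis with Lemma \ref{directsumlemma}\emph{(ii)} applied in the opposite orientation, with the $\bigoplus_{i<n}$ block as the extension side and $M_n, N_n$ as the unchanged side.

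I do not foresee any substantive obstacle: the argument is purely bookkeeping. The only points that require a moment's care are confirming that the hypothesis ``$\pr p \notin \assr{M'/N'}$'' of part \emph{(ii)} is precisely what the case split supplies, and noticing that the special case in which $\pr p$ lies in none of the $\assr{M_i/N_i}$ is vacuous because then $\pr p$ could not be a maximal element of the empty union. Everything else is driven automatically by the two-summand lemma and the inductive hypothesis.
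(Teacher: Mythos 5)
Your proof is correct and follows essentially the same route as the paper, which likewise argues by induction on $n$ using the identity $\assr{\oplus_i M_i/\oplus_i N_i}=\cup_i\assr{M_i/N_i}$ and the two parts of Lemma \ref{directsumlemma}; your version merely spells out the case analysis that the paper leaves implicit.
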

\begin{proof}
Since $\mathrm{Ass}\Big({\dfrac{\oplus_{i=1}^n{M}_i}{\oplus_{i=1}^n{N}_i}}\Big) = \bigcup\limits_{i=1}^n{\mathrm{Ass}\Big(\dfrac{M_i}{N_i} \Big)}$ the proof follows by induction on $n$ using Lemma \ref{directsumlemma}.
\end{proof}

\begin{proposition}\label{direct_sum_max}
Let $N$, $N'$ be proper submodules of modules $M$, $M'$ respectively. If ${\mathcal{P}}_{M}(N) = \qqq k$ and ${\mathcal{P}}_{M'}(N') = {{\mathfrak{p}}_1}^{s_1} \cdots {{\mathfrak{p}}_{k}}^{s_k}$ for some non-negative integers $r_i$ and $s_i$ with either $r_i >0$ or $s_i >0$, then ${\mathcal{P}}_{M \oplus M'}(N \oplus N') = {{\mathfrak{p}}_1}^{t_1} \cdots {{\mathfrak{p}}_{k}}^{t_k}$, where $t_i = \max\{r_i, s_i\}$ for $i = 1, \dots , k$.
\end{proposition}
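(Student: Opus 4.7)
The plan is to construct an RPE filtration of $M \oplus M'$ over $N \oplus N'$ directly, by interleaving given RPE filtrations of $M$ over $N$ and $M'$ over $N'$ and performing each step using Lemma \ref{directsum_corollary}. The generalized prime ideal factorization can then be read off from the resulting filtration and the uniqueness of prime multiplicities.

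First I would relabel indices so that ${\pr p}_i \not\subsetneq {\pr p}_j$ whenever $i<j$; this condition is vacuous within a block of equal primes, so it is a legitimate reordering in the sense of Remark \ref{reordering}. Applying that remark to both $M$ over $N$ and $M'$ over $N'$ produces RPE filtrations in which all of the ${\pr p}_1$-extensions come first, then all of the ${\pr p}_2$-extensions, and so on; indices $i$ with $r_i=0$ or $s_i=0$ simply contribute an empty block on the corresponding side. Call the intermediate modules $L^{(0)} \subset L^{(1)} \subset \cdots$ on the $M$-side and $L'^{(0)} \subset L'^{(1)} \subset \cdots$ on the $M'$-side, so that after processing primes up through ${\pr p}_i$ we are at $L^{(i)}$ and $L'^{(i)}$ respectively.

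Next I would walk through the construction on the direct sum. Starting at $N \oplus N'$, process ${\pr p}_1$ by applying $t_1=\max\{r_1,s_1\}$ successive regular ${\pr p}_1$-prime extensions: at each step, Lemma \ref{directsum_corollary} says the regular ${\pr p}_1$-extension of the current direct sum equals the direct sum of the regular ${\pr p}_1$-extension on each side when ${\pr p}_1$ lies in its $\mathrm{Ass}$, and is otherwise left unchanged. So during the first $\min\{r_1,s_1\}$ steps both factors advance (Lemma \ref{directsumlemma}(i)), and during the remaining $|r_1-s_1|$ steps only the factor still containing ${\pr p}_1$ in its $\mathrm{Ass}$ advances (Lemma \ref{directsumlemma}(ii)); after $t_1$ steps we arrive at $L^{(1)} \oplus L'^{(1)}$. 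Iterating this for ${\pr p}_2, \ldots, {\pr p}_k$ yields an RPE filtration of $M \oplus M'$ over $N \oplus N'$ with exactly $t_i$ occurrences of ${\pr p}_i$, so ${\mathcal{P}}_{M \oplus M'}(N \oplus N') = {\pr p}_1^{t_1} \cdots {\pr p}_k^{t_k}$.

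The main point that needs care is checking the hypothesis of Lemma \ref{directsum_corollary} at every step, namely that ${\pr p}_i$ really is a maximal element of the union of the $\mathrm{Ass}$ sets of the two quotients in question. By Lemma \ref{lemma*} applied to each side, after processing primes ${\pr p}_1, \ldots, {\pr p}_{i-1}$ and then $j < r_i$ of the ${\pr p}_i$-extensions on the $M$-side (similarly on the $M'$-side), the associated primes of the remaining quotient are contained in $\{{\pr p}_i, {\pr p}_{i+1}, \ldots, {\pr p}_k\}$, with ${\pr p}_i$ present whenever that side has any ${\pr p}_i$-extensions left. The ordering guarantees ${\pr p}_i$ is maximal in this set, and hence maximal in the union; this, together with the requirement $r_i+s_i>0$ so that ${\pr p}_i$ actually appears, is what makes the inductive step of the construction legitimate. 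The rest is routine bookkeeping.
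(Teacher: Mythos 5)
Your proof is correct and takes essentially the same route as the paper's: reorder so that ${\pr p}_i \not\subsetneq {\pr p}_j$ for $i<j$, block the two RPE filtrations by prime, and merge them step by step using Lemma \ref{directsumlemma}/Lemma \ref{directsum_corollary}, with Lemma \ref{lemma*} guaranteeing the maximality hypothesis at each stage. The only difference is presentational: the paper packages this as an induction on $k$ (splicing the ${\pr p}_1$-block onto the inductive piece via Lemma \ref{reg_div_rpe}), while you build the full interleaved filtration in a single pass.
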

\begin{proof}
We prove by induction on $k$. If ${\mathcal{P}}_{M}(N) = {\pr p}^{r}$ and ${\mathcal{P}}_{M'}(N') = {\pr p}^{s}$ for some prime ideal $\pr p$. Without loss of generality, we assume $r \geq s$. Then we have RPE filtrations
$$N \overset{\pr p} \subset N_1 \overset{\pr p} \subset N_2 \subset \cdots \subset N_{s} \overset{\pr p} \subset N_{s +1} \subset \cdots \subset N_{r} = M$$
$$N' \overset{\pr p} \subset N_1' \overset{\pr p} \subset N_2' \subset \cdots \subset N_{s}'  = M'.$$
Then by Lemma \ref{directsumlemma},
\begin{multline*}
N \oplus N' \overset{\pr p} \subset N_1 \oplus N_1' \overset{\pr p} \subset N_2 \oplus N_2' \overset{\pr p}\subset \cdots \overset{\pr p}\subset N_{s}\oplus N_{s}' = N_{s}\oplus M' \overset{\pr p} \subset N_{s +1}\oplus M' \\ \overset{\pr p}\subset \cdots \overset{\pr p}\subset N_{r}\oplus M' = M\oplus M'
\end{multline*}
is an RPE filtration. Therefore, ${\mathcal{P}}_{M \oplus M'}(N \oplus N') = {\pr p}^{r}$. Hence, the result is true for $k=1$.

Now let $k > 1$. By reordering ${{\pr p}_{1}}, \dots , {{\pr p}_{k}}$ we assume that ${\pr p}_i \not \subset {\pr p}_j$ whenever $i<j$. Then ${\pr p}_1$ is maximal in $\assr{M/N} \cup \assr{M'/N'}$. By Remark \ref{reordering}, there exists RPE filtrations
$$N \overset{{\pr p}_1} \subset N_1 \overset{{\pr p}_1} \subset N_2 \subset \cdots \overset{{\pr p}_1} \subset N_{r_1} \overset{{\pr p}_2} \subset K_1 \subset \cdots \subset M$$
$$N' \overset{{\pr p}_1} \subset N_1' \overset{{\pr p}_1} \subset N_2' \subset \cdots \overset{{\pr p}_1} \subset N_{s_1}' \overset{{\pr p}_2} \subset K_1' \subset \cdots \subset M'.$$
So we have ${\mathcal{P}}_{M}(N_{r_1}) = {{\pr p}_2}^{r_2} \cdots {{\pr p}_k}^{r_k}$ and ${\mathcal{P}}_{M'}(N_{s_1}') = {{\mathfrak{p}}_2}^{s_2} \cdots {{\mathfrak{p}}_{k}}^{s_k}$. By induction assumption, ${\mathcal{P}}_{M \oplus M'}(N_{r_1} \oplus N_{s_1}') = {{\pr p}_2}^{t_2} \cdots {{\pr p}_k}^{t_k}$, where $t_i = \max\{r_i, s_i\}$ for $i = 2, \dots , k$. By the previous paragraph, we have ${\mathcal{P}}_{N_{r_1} \oplus N_{s_1}'}(N \oplus N') = {{\pr p}_1}^{t_1}$, where $t_1 = \max\{r_1, s_1\}$. Then by Lemma \ref{directsum_corollary}, $N_{r_1} \oplus N_{s_1}'$ lies in an RPE filtration of $M \oplus M'$ over $N \oplus N'$. So by Lemma \ref{reg_div_rpe}, we get ${\mathcal{P}}_{M \oplus M'}(N \oplus N') = {\mathcal{P}}_{M \oplus M'}(N_{r_1} \oplus N_{s_1}')$ $ {\mathcal{P}}_{N_{r_1} \oplus N_{s_1}'}(N \oplus N') = {{\mathfrak{p}}_1}^{t_1} \cdots {{\mathfrak{p}}_{k}}^{t_k}$.
\end{proof}
Using induction, we have the following theorem.
\begin{theorem}\label{directsumthm}
Let $N_1, \dots , N_n$ be proper submodules of $R$-modules $M_1, \dots , M_n$ respectively. If ${\mathcal{P}}_{M_i}(N_i) = {{\pr p}_{1}}^{r_{i_1}} \cdots {{\pr p}_{k}}^{r_{i_k}} $ for $i = 1, \dots, n$ and $r_{i_j} \geq 0$, then ${\mathcal{P}}_{\oplus_{i=1}^n{M}_i}( \bigoplus\limits_{i=1}^n{N}_i ) = {{\pr p}_{1}}^{s_1} \cdots {{\pr p}_{k}}^{s_k}$, where $s_j = \max\{r_{1_j}, \dots , r_{n_j}\}$ for $j = 1, \dots , k$.
\end{theorem}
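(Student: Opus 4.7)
The plan is to run a straightforward induction on $n$, using Proposition \ref{direct_sum_max} as the engine that merges two direct summands at a time. The base case $n=1$ (or $n=2$, which is exactly Proposition \ref{direct_sum_max}) is immediate, so the real content is the inductive step.

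For the inductive step, assume the statement for $n-1$ and write $\bigoplus_{i=1}^n M_i = \bigl(\bigoplus_{i=1}^{n-1} M_i\bigr) \oplus M_n$ and $\bigoplus_{i=1}^n N_i = \bigl(\bigoplus_{i=1}^{n-1} N_i\bigr) \oplus N_n$. Applying the induction hypothesis to the first $n-1$ pairs $(M_i, N_i)$ (with the same list of primes ${\pr p}_1, \dots, {\pr p}_k$, allowing zero exponents where a given prime does not appear), we obtain
$$ {\mathcal{P}}_{\oplus_{i=1}^{n-1} M_i}\Bigl(\bigoplus_{i=1}^{n-1} N_i\Bigr) = {{\pr p}_1}^{t_1} \cdots {{\pr p}_k}^{t_k}, \qquad t_j = \max\{r_{1_j}, \dots, r_{(n-1)_j}\}. $$
Now apply Proposition \ref{direct_sum_max} to the two modules $\bigoplus_{i=1}^{n-1} M_i$ and $M_n$ (with submodules $\bigoplus_{i=1}^{n-1} N_i$ and $N_n$) whose generalized prime ideal factorizations are ${{\pr p}_1}^{t_1} \cdots {{\pr p}_k}^{t_k}$ and ${{\pr p}_1}^{r_{n_1}} \cdots {{\pr p}_k}^{r_{n_k}}$ respectively. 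This yields
$$ {\mathcal{P}}_{\oplus_{i=1}^{n} M_i}\Bigl(\bigoplus_{i=1}^{n} N_i\Bigr) = {{\pr p}_1}^{\max\{t_1, r_{n_1}\}} \cdots {{\pr p}_k}^{\max\{t_k, r_{n_k}\}}, $$
and since $\max\{t_j, r_{n_j}\} = \max\{r_{1_j}, \dots, r_{n_j}\} = s_j$, the desired factorization follows.

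The only point that needs a brief check (rather than a genuine obstacle) is that Proposition \ref{direct_sum_max} is applicable, namely that for each $j$ at least one of $t_j$ and $r_{n_j}$ is positive. We may assume the list ${\pr p}_1, \dots, {\pr p}_k$ consists exactly of those primes that appear with a positive exponent in some ${\mathcal{P}}_{M_i}(N_i)$, since any prime with $r_{i_j} = 0$ for all $i$ contributes only trivial factors on both sides and can be omitted without changing either member of the claimed equality. Under this convention, for each $j$ there is some $i$ with $r_{i_j} > 0$, forcing either $t_j > 0$ or $r_{n_j} > 0$, so the hypothesis of Proposition \ref{direct_sum_max} is satisfied and the induction goes through cleanly.
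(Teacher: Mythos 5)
Your proposal is correct and is exactly the argument the paper intends: the paper proves the two-module case as Proposition \ref{direct_sum_max} and then simply states ``Using induction, we have the following theorem,'' which is precisely your induction on $n$ merging one summand at a time. Your extra care in verifying the hypothesis of Proposition \ref{direct_sum_max} (that each prime appears with positive exponent in at least one factor, after discarding primes with all exponents zero) is a detail the paper leaves implicit, and it is handled correctly.
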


Now we get the following sufficient condition for the existence of an $R$-module $M$ and a submodule $N$ of $M$ with ${\mathcal{P}}_M(N) = \qqq n$.

\begin{corollary}
Let ${\pr p}_1, \dots, {\pr p}_n$ be distinct prime ideals in $R$ and $r_1, \dots , r_n$ be positive integers. If ${{\pr p}_i}^{r_i} \neq {{\pr p}_i}^{r_i - 1}$ and $\assr{R/{{\pr p}_i}^{r_i}} = \{{{\pr p}_i}\}$ for every $1 \leq i \leq n$, then ${\mathcal{P}}_{R^n}( {{\pr p}_1}^{r_1} \oplus \cdots \oplus {{\pr p}_n}^{r_n} ) = \qqq n$.
\end{corollary}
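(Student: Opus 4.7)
The proof is essentially a two-step assembly using results already proved in the paper, so my plan is to identify the right inputs and feed them into the machinery.

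First I would apply Corollary \ref{cor_power_iff} separately to each index $i \in \{1, \dots, n\}$. The hypotheses ${{\pr p}_i}^{r_i} \neq {{\pr p}_i}^{r_i - 1}$ and $\assr{R/{{\pr p}_i}^{r_i}} = \{{{\pr p}_i}\}$ are exactly the conditions on the right-hand side of that biconditional, so they yield ${\mathcal{P}}_R({{\pr p}_i}^{r_i}) = {{\pr p}_i}^{r_i}$ for every $i$. This identifies the generalized prime ideal factorization of each summand of the submodule ${{\pr p}_1}^{r_1} \oplus \cdots \oplus {{\pr p}_n}^{r_n}$ inside the corresponding summand $R$ of $R^n$.

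Next I would rewrite these factorizations in the common language of Theorem \ref{directsumthm}, taking the shared list of primes to be ${\pr p}_1, \dots, {\pr p}_n$. With this choice, the $i$-th factorization ${\mathcal{P}}_R({{\pr p}_i}^{r_i}) = {{\pr p}_i}^{r_i}$ corresponds to exponents $r_{i_j} = r_i$ when $j = i$ and $r_{i_j} = 0$ otherwise. Since the ${\pr p}_i$ are distinct and positive integers $r_i$ are specified, the hypotheses of Theorem \ref{directsumthm} (proper submodules, non-negative exponents) are satisfied.

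Finally I would read off the conclusion: Theorem \ref{directsumthm} gives
\[
{\mathcal{P}}_{R^n}\Bigl( {{\pr p}_1}^{r_1} \oplus \cdots \oplus {{\pr p}_n}^{r_n} \Bigr) = {{\pr p}_1}^{s_1} \cdots {{\pr p}_n}^{s_n},
\]
where $s_j = \max\{r_{1_j}, \dots, r_{n_j}\}$. For each $j$, exactly one entry in this max is $r_j$ and the rest are $0$, so $s_j = r_j$, which is the desired equality ${\mathcal{P}}_{R^n}( {{\pr p}_1}^{r_1} \oplus \cdots \oplus {{\pr p}_n}^{r_n} ) = \qqq n$. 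There is no real obstacle here: all the combinatorial and filtration work has been absorbed into Corollary \ref{cor_power_iff} and Theorem \ref{directsumthm}, so the proof is a one- or two-line citation.
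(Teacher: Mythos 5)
Your proposal is correct and matches the paper's own proof exactly: the paper likewise applies Corollary \ref{cor_power_iff} to each ${{\pr p}_i}^{r_i}$ and then invokes Theorem \ref{directsumthm}. You have simply made explicit the bookkeeping of exponents ($r_{i_j} = r_i$ if $j=i$ and $0$ otherwise) that the paper leaves implicit.
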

\begin{proof}
By Corollary \ref{cor_power_iff}, ${\mathcal{P}}_R({{\pr p}_i}^{r_i}) = {{\pr p}_i}^{r_i}$ for every $1 \leq i \leq n$. Then by Theorem \ref{directsumthm}, ${\mathcal{P}}_{R^n}( {{\pr p}_1}^{r_1} \oplus \cdots \oplus {{\pr p}_n}^{r_n} ) = \qqq n$.
\end{proof}

The next theorem gives a necessary and sufficient condition for the existence of an $R$-module $M$ with a submodule $N$ such that ${\mathcal{P}}_M(N) = \qqq n$.

\begin{theorem}\label{directsumiff}
Let ${\pr p}_1, \dots, {\pr p}_n$ be distinct prime ideals in $R$ and $r_1, \dots , r_n$ be positive integers. There exists an $R$-module $M$ and a submodule $N$ of $M$ with ${\mathcal{P}}_{M}(N)=\qqq n$ if and only if there exist $R$-modules $M_i$ and submodules $N_i$ of $M_i$ such that ${\mathcal{P}}_{M_i}(N_i) = {{\pr p}_i}^{r_i}$, $1 \leq i \leq n$.
\end{theorem}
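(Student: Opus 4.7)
The plan is to handle the two directions separately, with both reducing to results established earlier in Section~\ref{Sec:2}.

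For the forward direction, I would start with a module $M$ and submodule $N$ satisfying ${\mathcal{P}}_M(N) = \qqq n$ and reorder ${\pr p}_1, \dots, {\pr p}_n$ so that ${\pr p}_i \not\subset {\pr p}_j$ for $i<j$. Invoking Remark~\ref{reordering}, I get an RPE filtration of $M$ over $N$ in which all copies of ${\pr p}_i$ are grouped into a block of length $r_i$, exactly as displayed in the proof of Corollary~\ref{corprimepowers}:
$$N = L^{(0)} \overset{{\pr p}_1}\subset L^{(1)}_1 \subset \cdots \overset{{\pr p}_1}\subset L^{(1)}_{r_1} \overset{{\pr p}_2}\subset \cdots \overset{{\pr p}_n}\subset L^{(n)}_{r_n} = M.$$
Restricting to the $i$-th block gives an RPE filtration of $L^{(i)}_{r_i}$ over $L^{(i-1)}_{r_{i-1}}$ in which only ${\pr p}_i$ occurs (with multiplicity $r_i$). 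Taking $M_i := L^{(i)}_{r_i}$ and $N_i := L^{(i-1)}_{r_{i-1}}$ therefore yields ${\mathcal{P}}_{M_i}(N_i) = {{\pr p}_i}^{r_i}$, as required.

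For the reverse direction, I would take the direct sum construction: set $M := \bigoplus_{i=1}^n M_i$ and $N := \bigoplus_{i=1}^n N_i$. Since each ${\mathcal{P}}_{M_i}(N_i) = {{\pr p}_i}^{r_i}$, when I write ${\mathcal{P}}_{M_i}(N_i) = {{\pr p}_1}^{r_{i_1}} \cdots {{\pr p}_n}^{r_{i_n}}$ I have $r_{i_j} = r_i$ if $j=i$ and $r_{i_j}=0$ otherwise. Theorem~\ref{directsumthm} then gives
$${\mathcal{P}}_{M}(N) = {{\pr p}_1}^{s_1} \cdots {{\pr p}_n}^{s_n}, \quad s_j = \max\{r_{1_j}, \dots, r_{n_j}\} = r_j,$$
which is precisely $\qqq n$.

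Neither direction poses any real obstacle: the forward direction is essentially a repackaging of the argument already used in Corollary~\ref{corprimepowers}, while the reverse direction is a one-line application of Theorem~\ref{directsumthm}. The main conceptual work was already done in establishing the reordering result (Remark~\ref{reordering}) and the direct-sum formula (Theorem~\ref{directsumthm}); this theorem is the clean combinatorial consequence that separates the problem for a product of prime powers into independent problems for each prime power.
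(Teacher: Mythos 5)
Your proposal is correct and matches the paper's own proof essentially step for step: the paper likewise obtains the ``only if'' direction by extracting the blocks $N_i = L^{(i-1)}_{r_{i-1}} \subset L^{(i)}_{r_i} = M_i$ from the reordered block RPE filtration, and the ``if'' direction by applying Theorem~\ref{directsumthm} to $\bigoplus_i N_i \subseteq \bigoplus_i M_i$. No gaps; nothing further to add.
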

\begin{proof}
If there exist $R$-modules $M_i$ and submodules $N_i$ of $M_i$ such that ${\mathcal{P}}_{M_i}(N_i) = {{\pr p}_i}^{r_i}$, $1 \leq i \leq n$, then by Theorem \ref{directsumthm}, for $N={\oplus_{i=1}^n{N}_i}$ and $M={\oplus_{i=1}^n{M}_i}$ we have ${\mathcal{P}}_{M}(N)=\qqq n$.

Conversely if there exists an $R$-module $M$ and a submodule $N$ of $M$ with ${\mathcal{P}}_M(N) = \qqq n$, then taking $M_i = L^{(i)}_{r_{i}}$ and $N_i = L^{(i-1)}_{r_{i-1}}$ in the RPE filtration 
\begin{multline*}
N= L^{(0)} \overset{{\pr p}_{1}}\subset L^{(1)}_1 \overset{{\pr p}_{1}}\subset L^{(1)}_2 \overset{{\pr p}_{1}}\subset \cdots \overset{{\pr p}_{1}}\subset L^{(1)}_{r_1} \overset{{\pr p}_{2}}\subset L^{(2)}_1 \subset \cdots \\ \overset{{\pr p}_{n-1}}\subset L^{(n-1)}_{r_{n-1}} \overset{{\pr p}_{n}}\subset L^{(n)}_1 \overset{{\pr p}_{n}}\subset L^{(n)}_2 \subset \cdots \overset{{\pr p}_{n}}\subset L^{(n)}_{r_{n}} = M,
\end{multline*}
we get ${\mathcal{P}}_{M_i}(N_i) = {{\pr p}_i}^{r_i}$, $1 \leq i \leq n$.
\end{proof}


%
\vskip 0.4 true cm

\bibliographystyle{amsplain}

\bigskip
\bigskip

\noindent {\footnotesize {\bf  K. R. Thulasi }\; \\
Department of Mathematics, Pondicherry University, Pondicherry, India.\\
 {\tt thulasi.3008@gmail.com}

\noindent {\footnotesize {\bf  T. Duraivel }\; \\
Department of Mathematics, Pondicherry University, Pondicherry, India.\\
 {\tt tduraivel@gmail.com}
 
\noindent {\footnotesize {\bf  S. Mangayarcarassy }\; \\
Department of Mathematics, Puducherry Technological University, Pondicherry, India.\\
 {\tt dmangay@pec.edu}

\end{document}